 \newtheorem{thm}{Theorem}
  \newtheorem{defn}{Definition}
 \newtheorem{conj}{Conjecture}
 \newtheorem{exam}{Example}
\title{\textbf{On pentagon identity in Ding-Iohara-Miki algebra}}
\author{Yegor Zenkevich\thanks{yegor.zenkevich@gmail.com}\\
  {\small\textit{SISSA, via
      Bonomea 265, 34136 Trieste, Italy,}}\\
  {\small\textit{INFN, Sezione di Trieste,}}\\
  {\small\textit{IGAP, via Beirut 2/1, 34151 Trieste, Italy,}}\\
  {\small\textit{ITEP, Bolshaya Cheremushkinskaya street 25, 117218
      Moscow, Russia,}}\\
  {\small\textit{ITMP MSU, Leninskie gory 1, 119991 Moscow, Russia,}}
}
\date{}
\begin{document}
\maketitle
\vspace{-44ex}
\noindent
{\textit{To T.} \hfill ITEP/TH-34/21}


%
\vspace{37ex}

\begin{abstract}
  We notice that the famous pentagon identity for quantum dilogarithm
  functions and the five-term relation for certain operators related
  to Macdonald polynomials discovered by Garsia and Mellit can both be
  understood as specific cases of a general ``master pentagon
  identity'' for group-like elements in the Ding-Iohara-Miki (or
  quantum toroidal, or elliptic Hall) algebra. We perform some checks
  of this remarkable identity and discuss its implications.
\end{abstract}

\section{Introduction}
\label{sec:introduction}
Pentagon identities are ubiquitous in mathematics and physics. A
non-exhaustive list of their appearances includes representation
theory (where they describe coherence of associators, in particular
quantum $6j$-symbols~\cite{Kirillov:1991ec}), topological field
theory~\cite{Turaev:1992hq, Kashaev:1994pj}, hyperbolic geometry
(Teichm\"uller theory, both classical and
quantum~\cite{QuantTeich}--\cite{Hikami:2001en}) and integrable
systems~\cite{Bazhanov:2007mh}--\cite{Faddeev:1993pe}. In general a
pentagon identity has the form
\begin{equation}
  \label{eq:8}
  F \cdot F = F \cdot F \cdot F,
\end{equation}
where the precise meaning of $F$ depends on the context: for example
it can be an operator with the dot denoting a contraction or
composition, or it can be a simplex of a three-dimensional
triangulation in which case the dot implies gluing along a face.

In the present short note we propose a new pentagon identity for the
generating series of elements of Ding-Iohara-Miki (DIM) algebra
$U_{q_1,q_2,q_3}(\widehat{\widehat{\mathfrak{gl}}}_1)$~\cite{DIM1,
  DIM2}. This algebra has many alternative names because it has been
rediscovered several times in different contexts: elliptic Hall
algebra~\cite{GKV}, spherical double affine Hecke algebra of type
$GL(\infty)$~\cite{SV1}, quantum continuous $\mathfrak{gl}_{\infty}$
algebra~\cite{FFJMM-quant-cont}, deformed
$W_{1+\infty}$-algebra~\cite{DIM2}, quantum toroidal algebra of type
$\mathfrak{gl}_1$~\cite{Hern}. DIM algebra plays an important role in
modern mathematical physics (see
e.g.~\cite{AFS}--\cite{Zenkevich:2020ufs}) and its representation
theory is very rich.

Let us state our conjecture.
\begin{conj}\label{conj:pent}
  Let $\vec{\gamma} \in \mathbb{Z}^2 \backslash \{ (0,0) \}$. Consider
  the formal generating function
  \begin{equation}
    \label{eq:1}
    T_{\vec{\gamma}}(u) = \exp \left(- \sum_{n \geq 1} \frac{(-u)^n}{n} e_{n \vec{\gamma}} \right).
  \end{equation}
  where $e_{\vec{\gamma}} \in U_{q_1, q_2, q_3}
  (\widehat{\widehat{\mathfrak{gl}}}_1)$ with $\vec{\gamma} \in
  \mathbb{Z}^2 \backslash \{ (0,0) \}$ denote the standard generators
  of the DIM algebra (see Definition~\ref{def:dim} for the definition
  of the algebra).  Then the following identity holds
  \begin{equation}
    \label{eq:2}
    \boxed{T_{(0,1)}(v) T_{(1,0)}(u) =  T_{(1,0)}(u) T_{(1,1)}(u v) T_{(0,1)}(v)}
  \end{equation}
\end{conj}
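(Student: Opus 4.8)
The plan is to reduce the operator identity \eqref{eq:2} to a collection of scalar identities among the generators $e_{n\vec\gamma}$ by expanding both sides in $u$ and $v$ and comparing coefficients of each monomial $u^a v^b$. Since $T_{(0,1)}(v)$ depends only on the commuting generators $e_{n(0,1)}$ and similarly for $T_{(1,0)}(u)$, the content of the identity is entirely in how the two ``horizontal'' and ``vertical'' subalgebras intertwine through the ``diagonal'' direction $(1,1)$. The key structural input is the $SL(2,\mathbb Z)$ symmetry of the DIM algebra: the automorphism sending $(1,0)\mapsto(1,0)$, $(0,1)\mapsto(1,1)$ (a unipotent Miki automorphism) should carry a simpler, more symmetric relation into \eqref{eq:2}. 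So the first step is to rewrite \eqref{eq:2} in a frame where it becomes a statement about a single pair of Heisenberg-like subalgebras and their commutator.

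Concretely, I would take logarithms formally and use the Baker--Campbell--Hausdorff expansion. Write $A = -\sum_{n\ge1}\frac{(-u)^n}{n}e_{n(1,0)}$, $B=-\sum_{n\ge1}\frac{(-v)^n}{n}e_{n(0,1)}$, and $C = -\sum_{n\ge1}\frac{(-uv)^n}{n}e_{n(1,1)}$. The identity \eqref{eq:2} is then $e^B e^A = e^A e^C e^B$, equivalently $e^{-A}e^B e^A = e^C e^B$, i.e. $e^{\operatorname{ad}(-A)}B$ exponentiated equals $e^C e^B$. Expanding the left side, the successive nested commutators $[A,B]$, $[A,[A,B]]$, etc., must reorganize — via BCH on the right side — into the single exponential $e^C$ sitting to the left of $e^B$. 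The crucial algebraic fact to extract from Definition~\ref{def:dim} is the bracket between a horizontal and a vertical generator: one expects $[e_{(1,0)}, e_{(0,1)}]$ to be proportional to $e_{(1,1)}$ (times a central/grading factor), with the higher brackets $[e_{m(1,0)},e_{n(0,1)}]$ landing in the span of $e_{(m+n)(1,1)}$ and products thereof, governed by the DIM structure constants and the Kac--Moody-type central extension. Matching the generating-function coefficients on both sides should then be equivalent to a family of combinatorial identities for these structure constants, which I would verify order by order — checking $u^1v^1$, $u^2v^1$, $u^1v^2$, $u^2v^2$ explicitly, and then isolating the general pattern.

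The main obstacle I anticipate is the non-closure of the horizontal--vertical bracket on a single generator: $[e_{m(1,0)}, e_{n(0,1)}]$ generically produces not just $e_{(m+n)(1,1)}$ but normal-ordered products of lower diagonal generators (and contributions weighted by the central element $\psi^\pm$). Controlling these — showing that precisely the products generated by BCH on the right-hand side reproduce them, with the right coefficients — is where the quantitative work lives. A clean way to organize this is to realize all three $T_{\vec\gamma}$ as vertex operators in a free-field (Fock) representation of the DIM algebra, where $T_{(1,0)}(u)$ and $T_{(0,1)}(v)$ become (generalized) screening-type exponentials of Heisenberg modes; then \eqref{eq:2} becomes an operator identity among normal-ordered exponentials, provable by a standard Wick-theorem computation once the two-point contraction function $\langle \phi_{(1,0)}(u)\phi_{(0,1)}(v)\rangle$ is known. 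The identity should then follow from the factorization $(1-uv)$-type contraction kernel, exactly the shape of the quantum-dilogarithm pentagon, which is also the sanity check: specializing the Fock representation to the relevant $q\to1$ or rank-one limit must recover both the Faddeev--Kashaev pentagon and the Garsia--Mellit five-term relation advertised in the abstract.
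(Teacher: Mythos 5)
The statement you are addressing is, in the paper, a \emph{conjecture}: the author offers no proof, only evidence (evaluation in the vector and Fock representations, where it reduces to the Faddeev--Kashaev--Volkov and Garsia--Mellit identities respectively, plus direct low-order checks up to $u^2v^2$), and explicitly states that a formal proof would be desirable. Your proposal does not close this gap. Its first half is essentially the paper's ``direct check'' strategy: expand, use BCH, and compare coefficients of $u^a v^b$. The hard part --- which you correctly identify and then defer (``isolating the general pattern'') --- is that $[e_{m(1,0)},e_{n(0,1)}]$ for $m,n\ge 2$ corresponds to non-admissible triangles, so these commutators are only defined implicitly by recursion; there is no closed formula for the structure constants against which to ``match generating-function coefficients''. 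Already at order $u^2v^2$ one must derive by hand
\begin{equation*}
  [e_{(2,0)}, e_{(0,2)}] = \frac{\kappa_1}{2} e_{(1,1)}^2 + \left(2 - \frac{\kappa_2}{2\kappa_1}\right) e_{(2,2)},
\end{equation*}
and the paper notes that the published value of this very commutator in Burban--Schiffmann contains an error. Nothing in your outline produces the required all-order identity; checking $u^1v^1$ through $u^2v^2$ reproduces what the paper already does and still leaves a conjecture.

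The second half of your plan --- realize everything in a Fock representation and prove the identity by Wick's theorem --- would fail on two counts. First, even if it succeeded it would only establish the identity in that representation, not in the algebra, since the Fock representation is not faithful; this has the same logical status as the paper's own representation-theoretic checks. Second, the Wick-contraction mechanism is not available here: in the Fock representation $T_{(0,1)}(v)$ is indeed an exponential of annihilation modes, but $T_{(1,0)}(u)$ is \emph{not} a normal-ordered exponential of free-boson modes --- it is diagonal in the Macdonald basis with eigenvalue $\exp\bigl(\sum_{n\ge1}\frac{(-u)^n}{(1-q^n)(1-t^{-n})}\bigr)\prod_{(i,j)\in\lambda}(1+u q^{j-1}t^{1-i})$, and $T_{(1,1)}(uv)$ is its conjugate by the $\nabla$-type operator $\tau$. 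There is no two-point contraction kernel governing their exchange; the resulting operator statement is precisely the Garsia--Mellit five-term relation, whose proof required substantial Macdonald-polynomial combinatorics rather than a free-field computation. In other words, the ``sanity check'' you place at the end of your argument is the full difficulty in disguise, and your proposal, like the paper, leaves the general statement unproven.
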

One can view the pentagon identity~\eqref{eq:2} as a $q_3$-deformation
of the simplest of the motivic Kontsevich-Soibelman wall-crossing
formulas~\cite{Kontsevich:2008fj}. Indeed, DIM algebra is a
$q_3$-deformation of the Lie algebra of functions on a quantum torus
(see sec.~\ref{sec:defin-prop-dim}), so for $q_3 = 1$ the
identity~\eqref{eq:2} reduces to a motivic KS wall-crossing formula.

In what follows we give several arguments in favour of
Conjecture~\ref{conj:pent}. If the pentagon identity~\eqref{eq:2} is
valid in the algebra, then it should hold in all of its
representations, of which there are plenty. In
sec.~\ref{sec:rein-homom-or} we prove that in the so-called vector
representation the pentagon identity reduces to the famous
Faddeev-Kashaev-Volkov pentagon identity~\cite{Faddeev:1993pe,
  Faddeev:1993rs} for quantum dilogarithm functions. In
sec.~\ref{sec:pent-ident-gars} we evaluate~\eqref{eq:2} in the Fock
representation of $U_{q_1, q_2, q_3}
(\widehat{\widehat{\mathfrak{gl}}}_1)$ producing an identity for
certain operators related to Macdonald polynomials. This identity has
been proven in the recent work of Garsia and Mellit~\cite{GM}. These
two facts are nontrivial checks of the new pentagon
identity~\eqref{eq:2}. In sec.~\ref{sec:direct-checks} we also perform
some checks in lower orders of expansion in $u$ and $v$ using the
commutation relations of the DIM algebra directly.

Conjecture~\ref{conj:pent} implies some new identities, which we
briefly discuss in sec.~\ref{sec:appl-impl}. For example, there are
homomorphisms of DIM algebra to spherical double affine Hecke algebras
$\mathbb{H}_n$ for any $n$, therefore the pentagon identity should
also be valid in $\mathbb{H}_n$. In more down to earth terms this
would mean an identity involving Ruijsenaars-Schneider Hamiltonians
with $n$ particles. Evaluating~\eqref{eq:2} in the Macmahon
representation one could get an identity for certain combinatorial
operators acting on plane partitions. In principle, one can take any
representation of DIM algebra and obtain the corresponding pentagon
identity, which usually turns out to be new.

We present our conclusions and list some puzzles related to the DIM
pentagon identity in sec.~\ref{sec:conclusions}.

\section{Definition and properties of the DIM algebra}
\label{sec:defin-prop-dim}
\begin{defn}\label{def:dim}
  Let $q_1$, $q_2$ and $q_3$ be formal parameters such that $q_1 q_2
  q_3 = 1$. The algebra
  $U_{q_1,q_2,q_3}(\widehat{\widehat{\mathfrak{gl}}}_1)$ is
  multiplicatively generated by the central elements $c_1$, $c_2$ and
  the elements $e_{\vec{\gamma}}$, with $\vec{\gamma} \in
  \mathbb{Z}^2\backslash \{(0,0)\}$, satisfying the following
  relations (we follow the conventions of~\cite{FJMM1603, BS}):
\begin{enumerate}
\item For $\vec{\gamma} \in \mathbb{Z}^2\backslash \{(0,0)\}$ we
  define\footnote{Notice that $\gcd(\vec{\gamma})$ is always
    positive.} $\gcd(\vec{\gamma}) = \gcd(\gamma_1, \gamma_2)$. We
  call $\vec{\gamma} \in \mathbb{Z}^2\backslash \{(0,0)\}$ coprime if
  $\gcd(\vec{\gamma}) = 1$. Then for any nonzero coprime
  $\vec{\gamma}$
  \begin{equation}
    \label{eq:5}
    [e_{n\vec{\gamma}}, e_{m \vec{\gamma}}] = \frac{n}{\kappa_n}
    \delta_{n+m,0}(c_{n\vec{\gamma}} - c_{-n\vec{\gamma}}),
  \end{equation}
  where we abbreviate $c_{\vec{\gamma}} = c_1^{\gamma_1}
  c_2^{\gamma_2}$ and
  \begin{equation}
    \label{eq:9}
    \kappa_n = (1-q_1^n)(1-q_2^n)(1-q_3^n).
  \end{equation}

\item For $\vec{\alpha}, \vec{\beta} \in \mathbb{Z}^2\backslash
  \{(0,0)\}$, $ \vec{\alpha} \neq \vec{\beta}$ we set $\vec{\alpha} >
  \vec{\beta}$ if $(\alpha_1, \alpha_2)$ and $(\beta_1, \beta_2)$
  are ordered lexicographically, i.e.\ $\alpha_1 > \alpha_2$ or
  $\alpha_1 = \alpha_2$ and then $\beta_1 > \beta_2$. Let
  $\vec{\alpha} \times \vec{\beta} = \alpha_1 \beta_2 - \alpha_2
  \beta_1$ be the two-dimensional cross-product. We call a pair
  $(\vec{\alpha}, \vec{\beta})$ admissible if the triangle
  $T_{\vec{\alpha},\vec{\beta}} = (-\vec{\beta},0,\vec{\alpha})$
  does not contain any $\mathbb{Z}^2$ points in the interior and on at
  least two of its edges. We call the middle $M(T_{\vec{\alpha},
    \vec{\beta}})$ of the triangle $T_{\vec{\alpha},\vec{\beta}}$
  the vertex, which is in the middle according to the lexicographic
  ordering described above. Then for any $\vec{\alpha}, \vec{\beta}
  \in \mathbb{Z}^2\backslash \{(0,0)\}$ such that $T_{\vec{\alpha},
    \vec{\beta}}$ is admissible we have
  \begin{equation}
    \label{eq:10}
    [e_{\vec{\alpha}}, e_{\vec{\beta}}] =
    \begin{cases}
      \frac{1}{\kappa_1} c_{M(T_{\vec{\alpha}, \vec{\beta}})}
      h_{\vec{\alpha}+\vec{\beta}}, &
      \vec{\alpha} \times \vec{\beta} >0,\\
      -\frac{1}{\kappa_1} c_{M(T_{\vec{\beta}, \vec{\alpha}})}
      h_{\vec{\alpha}+\vec{\beta}}, & \vec{\alpha} \times \vec{\beta}
      <0,
    \end{cases}
  \end{equation}
  where $h_{\vec{\gamma}}$ is defined using a generating series as
  follows. Let $\vec{\gamma}$ be coprime, then
  \begin{equation}
  \label{eq:11}
  1 + \sum_{n \geq 1} h_{n \vec{\gamma}} z^{-n} = \exp \left( - \sum_{k
      \geq 1} \frac{\kappa_k}{k} e_{k \vec{\gamma}} z^{-k}\right),
\end{equation}
or more explicitly
\begin{equation}
  \label{eq:12}
  h_{n \vec{\gamma}} = (-1)^n E_n(\kappa_k \vec{e}_{k \vec{\gamma}}),
\end{equation}
where $E_n$ are elementary symmetric functions\footnote{We use the
  upper-case letter to avoid confusion with the generators
  $e_{\vec{\gamma}}$.}, so that
\begin{align}
  \label{eq:13}
  h_{\vec{\gamma}} &= - \kappa_1 e_{\vec{\gamma}},\\
  h_{2\vec{\gamma}} &= \frac{\kappa_1^2}{2} e_{\vec{\gamma}}^2 -
  \frac{\kappa_2}{2} e_{2 \vec{\gamma}},\\
  h_{3\vec{\gamma}} &= - \frac{\kappa_1^3}{6} e_{\vec{\gamma}}^3 +
  \frac{\kappa_1\kappa_2}{2} e_{2 \vec{\gamma}} e_{\vec{\gamma}} -
  \frac{\kappa_3}{3} e_{3\vec{\gamma}},\\
  &\cdots 
\end{align}
\end{enumerate}
\end{defn}
Commutation relations for non-admissible triangles are defined
implicitly, i.e.\ they can be obtained by the successive application
of the relations for admissible triangles. In fact the whole algebra
can be generated from just six elements: $e_{(\pm 1, 0)}$, $e_{(0,\pm
  1)}$ and $c_1$, $c_2$. We list some examples of commutation
relations to make Definition~\ref{def:dim} more accessible and to
demonstrate where the nontrivial dependence on $q_i$ and $c_1$, $c_2$
appears.

\begin{exam}
  If $\vec{\alpha}>(0,0)$, $\vec{\beta}>(0,0)$, then
  $M(T_{\vec{\alpha}, \vec{\beta}}) = (0,0)$, so the factor
  $c_{M(T_{\vec{\alpha}, \vec{\beta}})} = 1$ does not appear in
  Eq.~\eqref{eq:10}.  If in addition
  $\gcd(\vec{\alpha}+\vec{\beta})=1$, then
  \begin{equation}
    \label{eq:14}
        [e_{\vec{\alpha}}, e_{\vec{\beta}}] =
    - e_{\vec{\alpha}+\vec{\beta}}
  \end{equation}
\end{exam}

\begin{exam} We give a sample of the simplest commutation relations:
  \begin{align}
    \label{eq:7}
    [e_{(1,0)}, e_{(0,1)}] &= - e_{(1,1)},\\
    [e_{(0,1)}, e_{(-1,0)}] &= - c_{(0,1)} e_{(-1,1)},\\
    [e_{(-1,0)}, e_{(0,-1)}] &= -  e_{(-1,-1)},\\
    [e_{(2,1)}, e_{(0,1)}] &= \frac{\kappa_1}{2} e_{(1,1)}^2 -
    \frac{\kappa_2}{2 \kappa_1} e_{(2,2)}.
  \end{align}
\end{exam}
  
DIM algebra so defined is obviously invariant under any permutation of
the parameters $q_1$, $q_2$ and $q_3$. It is doubly graded, with
$e_{\vec{\gamma}}$ having grades $(\gamma_1, \gamma_2)$. The algebra
also has a large group of automorphisms which contains the universal
cover $\widetilde{SL}(2,\mathbb{Z})$ of $SL(2,
\mathbb{Z})$~\cite{BS}. This universal cover retains the knowledge of
the ``winding'' of a given $\mathbb{Z}^2$ vector around the origin and
fits into the short exact sequence
\begin{equation}
  \label{eq:15}
  0 \to \mathbb{Z} \to \widetilde{SL}(2,\mathbb{Z}) \to SL(2,
  \mathbb{Z}) \to 0.
\end{equation}
More concretely, the generator\footnote{The standard identification of
  $SL(2,\mathbb{Z})$ generators with matrices is
  $S_{\mathrm{standard}} = \left(
    \begin{smallmatrix}
      0& -1\\
    1& 0
    \end{smallmatrix}
\right)$, $T_{\mathrm{standard}} = \left(
    \begin{smallmatrix}
      1& 1\\
    0& 1
    \end{smallmatrix}
  \right)$, so our generators are $S =
  S_{\mathrm{standard}}^{-1}$, $T =
  S_{\mathrm{standard}} T_{\mathrm{standard}}^{-1} S_{\mathrm{standard}}^{-1}$.} $S = \left(
  \begin{smallmatrix}
    0& 1\\
    -1& 0
  \end{smallmatrix}
\right)$ of $SL(2, \mathbb{Z})$, which rotates $\mathbb{Z}^2$ by
$-\frac{\pi}{2}$ is lifted to a generator of
$\widetilde{SL}(2,\mathbb{Z})$ which fourth power is no longer
trivial, but changes winding number by one. The $T=\left(
    \begin{smallmatrix}
      1& 0\\
    1& 1
    \end{smallmatrix}
  \right)$ generator remains unchanged. The action of automorphisms on
  the DIM generators is
\begin{align}
  \label{eq:16}
  \mathcal{S}(e_{\vec{\alpha}}) &=
  \begin{cases}
    e_{S(\vec{\alpha})}, & \alpha_1 \geq 0, \alpha_2 >0, \text{ or }
    \alpha_1 \leq 0, \alpha_2 <0,\\
    c_{-S(\vec{\alpha})} e_{S(\vec{\alpha})}, & \alpha_1 \geq 0,
    \alpha_2 >0, \text{ or }
    \alpha_1 \leq 0, \alpha_2 <0,\\
  \end{cases}\\
\mathcal{S}(c_{\vec{\alpha}}) &=c_{S(\vec{\alpha})} \\
   \mathcal{T}(e_{\vec{\alpha}}) &=
      e_{T(\vec{\alpha})},\\
   \mathcal{T}(c_{\vec{\alpha}}) &=
      c_{T(\vec{\alpha})}.     
\end{align}
Notice that in particular that
\begin{equation}
  \label{eq:19}
  \mathcal{S}^4(e_{\vec{\alpha}}) = c_{-2\vec{\alpha}}
  e_{\vec{\alpha}}
\end{equation}
for any $\vec{\alpha}$, which is precisely the element of winding
number one in $\widetilde{SL}(2,\mathbb{Z})$. The complicated picture
with universal cover arises because of the lexicographic ordering
entering the definition of $c_{M(T_{\vec{\alpha}, \vec{\beta}})}$. In
the absence of central charges the automorphism group is just
$SL(2,\mathbb{Z})$. In fact since the pentagon identity involves only
the generators from the first quadrant, the central charges will never
appear in the commutation process.

In the limit when one of the parameters $q_i$ goes to one, the
algebra $U_{q_1,q_2,q_3}(\widehat{\widehat{\mathfrak{gl}}}_1)$ becomes
the Lie algebra of functions on a quantum torus, also known as
$qW_{1+\infty}$-algebra. Since $q_i$ enter in the definition
symmetrically, we can consider $q_3 \to 1$ without loss of
generality. We introduce the rescaled generators and central charges
in the limit as follows:
\begin{equation}
  \label{eq:20}
  w_{\vec{\alpha}} = \left( q_1^{\frac{\gcd(\vec{\alpha})}{2}} -
    q_1^{-\frac{\gcd(\vec{\alpha})}{2}} \right) e_{\vec{\alpha}},
  \qquad \qquad c_i \to q_3^{\tilde{c}_i} \quad i=1,2.
\end{equation}
Then the commutation relations turn into Lie algebraic form:
\begin{equation}
  \label{eq:21}
 \boxed{ [w_{\vec{\alpha}}, w_{\vec{\beta}}] = \left( q_1^{\frac{\vec{\alpha}
        \times \vec{\beta}}{2}} - q_1^{-\frac{\vec{\alpha} \times
        \vec{\beta}}{2}} \right) w_{\vec{\alpha}+\vec{\beta}} +
  \delta_{\vec{\alpha}+\vec{\beta},0} ( \alpha_1 \tilde{c}_1 + \alpha_2 \tilde{c}_2)}
\end{equation}
Notice that the identities~\eqref{eq:21} with $\tilde{c}_i = 0$
would follow from the multiplication rule
\begin{equation}
  \label{eq:22}
  w_{\vec{\alpha}} w_{\vec{\beta}} = q_1^{\frac{\vec{\alpha} \times \vec{\beta}}{2}} w_{\vec{\alpha}+\vec{\beta}},
\end{equation}
but it does not hold in a general representation of the Lie
algebra~\eqref{eq:21}.

\section{Checks of the pentagon conjecture}
\label{sec:checks-conjecture}
In this section we check Conjecture~\ref{conj:pent} by evaluating it
in a vector representation of DIM algebra
(sec.~\ref{sec:rein-homom-or}), in a Fock representation
(sec.~\ref{sec:pent-ident-gars}) and also by some brute force
calculations using commutation relations
(sec.~\ref{sec:direct-checks}). 

\subsection{Reineke's character, vector representations and quantum
  dilogarithm}
\label{sec:rein-homom-or}

\begin{thm}
  Let $\mathcal{O}_{q_i}$ be the associative algebra of functions on a
  two-dimensional quantum torus $(\mathbb{C}^{\times} \times
  \mathbb{C}^{\times})_{q_i}$ with non-commutativity parameter
  $q_i$. The algebra $\mathcal{O}_{q_i}$ is multiplicatively generated
  by $\mathbf{x}$ and $\mathbf{y}$, satisfying the $q_i$-commutation
  relations
  \begin{equation}
  \label{eq:4}
  \mathbf{y}\, \mathbf{x} = q_i \mathbf{x}\, \mathbf{y}.
\end{equation}
Then the map $\rho_i$ (for $i=1,2,3$) called Reineke's
character~\cite{Reineke}
\begin{equation}
  \label{eq:6}
  \rho_i:
  U_{q_1,q_2,q_3}(\widehat{\widehat{\mathfrak{gl}}}_1) \to
  \mathcal{O}_{q_i},
\end{equation}
described by the formulas
\begin{align}
  \label{eq:3}
  \rho_i(e_{\vec{\gamma}}) &= \frac{q_i^{\frac{\gamma_1
        \gamma_2}{2}}}{q_i^{\frac{\gcd(\vec{\gamma})}{2}}-q_i^{-\frac{\gcd(\vec{\gamma})}{2}}}
  \mathbf{x}^{\gamma_1} \mathbf{y}^{\gamma_2}, \\
  \rho_i(c_{\vec{\gamma}})&=1
\end{align}
for any $ \vec{\gamma} \neq 0$ is a homomorphism.
\end{thm}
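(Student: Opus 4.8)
The plan is to check directly that $\rho_i$ respects the defining relations of $U_{q_1,q_2,q_3}(\widehat{\widehat{\mathfrak{gl}}}_1)$; since the algebra is presented by the collinear relations~\eqref{eq:5} and the admissible‑triangle relations~\eqref{eq:10}–\eqref{eq:12} (the non‑admissible commutators being formal consequences of these, hence automatically preserved), a map defined on generators that preserves these relations extends uniquely to a homomorphism. The single most helpful structural fact is that $\rho_i$ sends every central generator to $1$: therefore every occurrence of $c_{\vec\gamma}$ or $c_{M(T)}$ trivializes, and in particular all of the lexicographic/winding bookkeeping discussed around~\eqref{eq:19} becomes invisible on the level of $\rho_i$, so no restriction to a quadrant is needed.

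First I would dispose of~\eqref{eq:5}. For coprime $\vec\gamma$ the images $\rho_i(e_{n\vec\gamma})$ are scalar multiples of the Laurent monomials $\mathbf{x}^{n\gamma_1}\mathbf{y}^{n\gamma_2}$, and a one‑line computation in $\mathcal{O}_{q_i}$ shows that monomials along the same ray commute (both orderings give the same power $q_i^{nm\gamma_1\gamma_2}$), so the left side of~\eqref{eq:5} maps to $0$; the right side maps to $\frac{n}{\kappa_n}\delta_{n+m,0}(1-1)=0$. This also shows that all $e_{k\vec\gamma}$, $k\ge1$, have commuting images, so $\rho_i(h_{n\vec\gamma})$ is well defined and can be computed by applying $\rho_i$ to the exponential generating series~\eqref{eq:11}. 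Carrying this out, one rewrites $\kappa_k/(q_i^{k/2}-q_i^{-k/2})=-q_i^{k/2}\prod_{j\ne i}(1-q_j^k)$, uses $q_1q_2q_3=1$ to factor it as $(q_i^{1/2})^k+(q_i^{-1/2})^k-w^k-w^{-k}$ with $w=q_i^{1/2}q_j$ (so that $q_i^{1/2}q_l=w^{-1}$ for the third parameter), and resums the resulting logarithms. The upshot is that $1+\sum_{n\ge1}\rho_i(h_{n\vec\gamma})z^{-n}$ is the explicit ratio of quadratics $\frac{(1-ws)(1-w^{-1}s)}{(1-q_i^{1/2}s)(1-q_i^{-1/2}s)}$ with $s\propto z^{-1}$, from which one extracts $\rho_i(h_{n\vec\gamma})=c_1\,[n]_{q_i^{1/2}}\bigl(q_i^{\gamma_1\gamma_2/2}\mathbf{x}^{\gamma_1}\mathbf{y}^{\gamma_2}\bigr)^n$, where $[n]_{q_i^{1/2}}=\frac{q_i^{n/2}-q_i^{-n/2}}{q_i^{1/2}-q_i^{-1/2}}$ and $c_1=-\kappa_1/(q_i^{1/2}-q_i^{-1/2})$ (the recursion $[n+1]+[n-1]=(q_i^{1/2}+q_i^{-1/2})[n]$ collapses the coefficients to this clean form, consistent with~\eqref{eq:13}).

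Finally I would verify~\eqref{eq:10}. For an admissible pair $(\vec\alpha,\vec\beta)$ both sides become scalar multiples of the one monomial $\mathbf{x}^{\alpha_1+\beta_1}\mathbf{y}^{\alpha_2+\beta_2}$: the left side from $[\mathbf{x}^{\alpha_1}\mathbf{y}^{\alpha_2},\mathbf{x}^{\beta_1}\mathbf{y}^{\beta_2}]=(q_i^{\alpha_2\beta_1}-q_i^{\alpha_1\beta_2})\mathbf{x}^{\alpha_1+\beta_1}\mathbf{y}^{\alpha_2+\beta_2}$, the right side from the previous paragraph with the $c_{M(T)}$ factor killed, so~\eqref{eq:10} reduces to a scalar identity. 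Here admissibility is used decisively: Pick's theorem applied to the triangle $(-\vec\beta,0,\vec\alpha)$ (empty interior) gives $|\vec\alpha\times\vec\beta|=\gcd(\vec\alpha)+\gcd(\vec\beta)+\gcd(\vec\alpha+\vec\beta)-2$, and "at least two edges with no interior lattice points" says at least two of those three gcds equal $1$; together these force $|\vec\alpha\times\vec\beta|$ to be exactly the largest of the three gcds (or $1$ when all equal $1$). With this, the scalar identity follows from $(\alpha_1+\beta_1)(\alpha_2+\beta_2)=(\alpha_1\alpha_2+\beta_1\beta_2)+(\alpha_1\beta_2+\alpha_2\beta_1)$ together with $c_n=c_1[n]_{q_i^{1/2}}$, in two cases: $\gcd(\vec\alpha+\vec\beta)=1$, where $h_{\vec\alpha+\vec\beta}=-\kappa_1e_{\vec\alpha+\vec\beta}$ and only $[n]_{q_i^{1/2}}$ with $n=\gcd(\vec\alpha)$ or $\gcd(\vec\beta)$ occurs, and $\gcd(\vec\alpha+\vec\beta)=g\ge2$, where $\gcd(\vec\alpha)=\gcd(\vec\beta)=1$, $|\vec\alpha\times\vec\beta|=g$, and the full identity $c_g=c_1[g]_{q_i^{1/2}}$ is needed. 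The main obstacle is precisely this last case: one must match the combinatorial normalization of $h_{n\vec\gamma}$ coming from the elementary‑symmetric‑function definition~\eqref{eq:12} against the bilinear structure constant produced by the quantum‑torus commutator, and it is the conjunction of the resummation identity $c_g=c_1[g]_{q_i^{1/2}}$ with the Pick‑theorem constraint $|\vec\alpha\times\vec\beta|=g$ that makes it work. A secondary, cosmetic nuisance is keeping track of the branch $q_i^{1/2}$ and the sign $\mathrm{sgn}(\vec\alpha\times\vec\beta)$ distinguishing the two lines of~\eqref{eq:10}; this is harmless because $q_i^{\vec\alpha\times\vec\beta/2}-q_i^{-\vec\alpha\times\vec\beta/2}$ already carries that sign and cancels it.
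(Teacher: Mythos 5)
Your proposal is correct and follows the same route as the paper, whose proof is simply the one‑line ``by explicit verification of the commutation relations''; you have actually carried that verification out. The key computations all check out: monomials along a ray commute so~\eqref{eq:5} maps to $0=0$, the resummation of~\eqref{eq:11} into $\frac{(1-ws)(1-w^{-1}s)}{(1-q_i^{1/2}s)(1-q_i^{-1/2}s)}$ gives $\rho_i(h_{n\vec{\gamma}})\propto [n]_{q_i^{1/2}}$, and the Pick's‑theorem consequence of admissibility ($|\vec{\alpha}\times\vec{\beta}|=\gcd(\vec{\alpha})\gcd(\vec{\beta})\gcd(\vec{\alpha}+\vec{\beta})$ with two of the gcds equal to $1$) is exactly what makes the scalar identity in~\eqref{eq:10} close in all cases.
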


\begin{proof}
  By explicit verification of the commutation relations.
\end{proof}

Under the character map $\rho_i$ the elements $T_{\vec{\gamma}}(u)$
for coprime $\vec{\gamma}$ become
\begin{equation}
  \label{eq:17}
  \rho_i(T_{\vec{\gamma}}(u)) = \left( - q_i^{\frac{1+ \gamma_1 \gamma_2}{2}} u\,
    \mathbf{x}^{\gamma_1} \mathbf{y}^{\gamma_2}; q_i \right)_{\infty}^{-1},
\end{equation}
where
\begin{equation}
  \label{eq:18}
  (x;q)_{\infty} = \prod_{n \geq 0} (1- q^i x).
\end{equation}
The pentagon identity~\eqref{eq:2} then becomes the well-known
identity for quantum dilogarithms (we get rid of the
$q_i^{\frac{1}{2}}$ factors by rescaling the generating function
parameters $u \mapsto q_i^{\frac{1}{2}}u$, $v \mapsto
q_i^{\frac{1}{2}} v$):
\begin{equation}
  \label{eq:23}
\boxed{  (-v\, \mathbf{y} ; q_i )_{\infty}^{-1}
  (- u\, \mathbf{x} ; q_i )_{\infty}^{-1} =
  (- u\, \mathbf{x} ; q_i )_{\infty}^{-1}   (- u
  v\, \mathbf{x}\, \mathbf{y} ; q_i )_{\infty}^{-1}
  (- v\, \mathbf{y} ; q_i )_{\infty}^{-1}}
\end{equation}
Thus, under Reineke's map DIM pentagon identity~\eqref{eq:2} maps to a
true statement. It is curious that we can obtain three ``different''
quantum dilogarithm identities, one for each $q_i$, from a single
``master'' identity~\eqref{eq:2}.

The Reineke's character can be used to obtain a representation of the
DIM algebra from a representation of $\mathcal{O}_{q_i}$ e.g.\ using
difference operators on functions of a single variable $w$. One can
take
\begin{align}
  \label{eq:24}
  \mathbf{x} &= q_i^{- w\partial_w},\\
  \mathbf{y} &= w.
\end{align}
These are known as vector representations $\mathcal{V}_{q_i}$ of
$U_{q_1,q_2,q_3} (\widehat{\widehat{\mathfrak{gl}}}_1)$.

Another way to view DIM pentagon identity~\eqref{eq:2} is as a
\emph{refinement,} or \emph{categorification} of the pentagon identity
in $\mathcal{O}_{q_i}$, because it splits the terms with the same
powers of $\mathbf{x}$ and $\mathbf{y}$ into separate entities, e.g.\
a term $\mathbf{x}^2 \mathbf{y}^2$ in the identity for quantum
dilogarithms can be either $e_{(2,2)}$ or $e_{(1,1)}^2$ in the DIM
algebra (see sec.~\ref{sec:direct-checks} for precisely this
computation). The Reineke's character, as its name suggests, plays the
role of taking the trace, or the grading, of a refined formula.

\subsection{Pentagon identities of Garsia-Mellit from Fock
  representation}
\label{sec:pent-ident-gars}
\begin{thm}
  Let $\mathcal{F}_{q_1,q_2}^{(1,0)} = \mathbb{C}[p_1, p_2,
  \ldots]$ be the vector space of polynomials in time variables
  $p_n$. Denote $q = q_1$, $t= q_2^{-1}$ and let
  $M_{\lambda}^{(q,t)}(p_n)$ be Macdonald polynomial in the standard
  normalization labelled by a Young diagram $\lambda = \{ \lambda_1,
  \lambda_2, \ldots, \lambda_{l(\lambda)}\}$. Then the map
  \begin{equation}
    \label{eq:26}
    f:
    U_{q_1,q_2,q_3} (\widehat{\widehat{\mathfrak{gl}}}_1) \to
    \mathrm{Aut} (\mathcal{F}_{q,t^{-1}}^{(1,0)})
  \end{equation}
  defined by relations below gives a representation of
  $U_{q_1,q_2,q_3} (\widehat{\widehat{\mathfrak{gl}}}_1)$ on
  $\mathcal{F}_{q,t^{-1}}^{(1,0)}$:
  \begin{align}
    \label{eq:25}
    f(e_{(0, n)}) &= \frac{1}{1-t^{-n}}n \frac{\partial}{\partial
      p_n},
    \quad n>0,\\
    f(e_{(0,-n)}) &= - \left( \frac{q}{t} \right)^{\frac{n}{2}}
    \frac{1}{1-q^n} p_n, \quad n>0,\\
    f(e_{(1,n)}) &= - \frac{1}{(1-q)(1-t^{-1})} \oint_{\mathcal{C}_0}
    \frac{dz}{z} z^n e^{\sum_{k \geq 1} \frac{z^k}{k} (1-t^{-k}) p_k }
    e^{- \sum_{k \geq 1} z^{-k} (1-q^k) \frac{\partial}{\partial
        p_k}}, \quad n \in
    \mathbb{Z},\\
    f(e_{(-1,n)}) &= \frac{1}{(1-q^{-1})(1-t)} \oint_{\mathcal{C}_0}
    \frac{dz}{z} z^n e^{ - \sum_{k \geq 1} \frac{z^k}{k} (1-t^{-k})
      \left( \frac{t}{q} \right)^{\frac{k}{2}} p_k} e^{ \sum_{k \geq
        1} z^{-k} (1-q^k) \left( \frac{t}{q} \right)^{\frac{k}{2}}
      \frac{\partial}{\partial p_k} }, \quad n \in \mathbb{Z},\\
    f(e_{\pm n,0})M_{\lambda}^{(q,t)}(p_m) & = \pm \left( -
      \frac{1}{(1-q^{\pm n})(1-t^{\mp n})} + \sum_{(i,j)\in \lambda}
      q^{\pm n(j-1)} t^{\pm n(1-i)}\right) M_{\lambda}^{(q,t)}(p_m),
    \qquad n>0,\\
    f(e_{n,1})M_{\lambda}^{(q,t)}(p_m) &= \frac{\left( \frac{t}{q}
      \right)^{\frac{1}{2}(1 + \delta_{n \geq 0})}}{1-q^{-1}}
    \oint_{\mathcal{C}_0} \frac{dz}{z} z^n \sum_{i=1}^{l(\lambda)}
    \frac{1- q \frac{z}{t^{1-i}}}{1 - t \frac{z}{t^{1-i}}}
    \prod_{j=i+1}^{l(\lambda)} \frac{\psi \left(
        \frac{z}{q^{\lambda_j} t^{1-j}} \right)}{\psi \left(
        \frac{z}{t^{1-j}} \right)} \delta \left( \frac{z}{q^{\lambda_i
          -1} t^{1-i}} \right)M_{\lambda - 1_i}^{(q,t)}(p_m),\label{eq:29}\\
    f(e_{n,-1})M_{\lambda}^{(q,t)}(p_m) &= -\frac{\left( \frac{q}{t}
      \right)^{\frac{1}{2}\delta_{n \leq 0}}}{1-q}
    \oint_{\mathcal{C}_0} \frac{dz}{z} z^n \sum_{i=1}^{l(\lambda)+1}
    \prod_{j=1}^{i-1} \psi \left( \frac{z}{q^{\lambda_j} t^{1-j}}
    \right)\delta \left( \frac{z}{q^{\lambda_i} t^{1-i}} \right)
    M_{\lambda + 1_i}^{(q,t)}(p_m), \qquad n \in
    \mathbb{Z},\label{eq:28}
  \end{align}
  \begin{align}
      f(c_1) & = 1,\\
      f(c_2) &= \left( \frac{t}{q} \right)^{\frac{1}{2}}.
  \end{align}
  where $\mathcal{C}_0$ is a small contour around the origin, $\lambda
  \pm 1_i = \{ \lambda_1, \ldots, \lambda_{i-1}, \lambda_i \pm 1,
  \lambda_{i+1}, \cdots \} $ and
  \begin{equation}
    \label{eq:27}
    \delta_{n \geq 0} =
  \begin{cases}
    1, & n\geq 0\\
    0, & n<0
  \end{cases}, \qquad \qquad \delta(x) = \sum_{n \in \mathbb{Z}} x^n,
  \qquad \qquad \psi (x) = \frac{(1-tx) \left( 1 - \frac{q}{t}x \right)}{(1-x)(1-qx)}.
  \end{equation}
\end{thm}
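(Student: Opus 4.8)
The plan is to reduce the statement to a finite list of relations. Recall from the discussion after Definition~\ref{def:dim} that $U_{q_1,q_2,q_3}(\widehat{\widehat{\mathfrak{gl}}}_1)$ is generated by $e_{(\pm1,0)}$, $e_{(0,\pm1)}$, $c_1$, $c_2$, and that the relations of Definition~\ref{def:dim} are equivalent to the Drinfeld-type presentation whose generating data are the currents $e(z)=\sum_{n\in\mathbb{Z}}e_{(1,n)}z^{-n}$, $f(z)=\sum_{n\in\mathbb{Z}}e_{(-1,n)}z^{-n}$, the vertical Heisenberg generators $e_{(0,\pm m)}$, and the central elements, subject to the quadratic $g$-function relations for $e(z)$ and $f(z)$, the $\psi^{\pm}$-weight relations, the $[e(z),f(w)]$ relation, and the cubic Serre relations. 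Every other generator is an iterated commutator of these; in particular $e_{(n,1)}$ and $e_{(n,-1)}$ are, up to central factors, the commutators $[e_{(n,0)},e_{(0,\pm1)}]$, and the $e_{(n,0)}$ themselves are built from the currents and the vertical Heisenberg (e.g.\ $e_{(1,0)}=c_2\,[e_{(1,1)},e_{(0,-1)}]$). It therefore suffices to verify (a) that the operators $f(e_{(0,\pm m)})$ and $f(e_{(\pm1,n)})$ satisfy the current relations, and (b) that the explicit formulas for $f(e_{(\pm n,0)})$ and $f(e_{(n,\pm1)})$ acting on Macdonald polynomials coincide with the operators produced from (a) by those commutators.

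For step (a) I would first check that $f(e_{(0,m)})$ and $f(e_{(0,-m)})$ generate a Heisenberg algebra matching~\eqref{eq:5} for $\vec{\gamma}=(0,1)$; this is the elementary computation $[\partial/\partial p_m,\,p_{m'}]=m\,\delta_{mm'}$ combined with the substitutions $q_3=t/q$ and $c_2\mapsto(t/q)^{1/2}$. Since the $f(e_{(\pm1,n)})$ are the Fourier modes of normal-ordered exponentials of these bosons, their pairwise relations follow from Wick's theorem: the contraction functions are rational with zeros and poles at $w/z=q_i^{\pm1}$, which reproduces the quadratic relations, fixes the Cartan currents $\psi^{\pm}(z)$, and — after antisymmetrization — yields the Serre relations. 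The genuinely delicate point here is the mixed relation: deforming the $z$-contour across the $w$-contour in $[f(e)(z),f(f)(w)]$ produces residues of the contraction that must assemble exactly into $\tfrac1{\kappa_1}\bigl(\delta(c_2^2\,w/z)\psi^{+}(w)-\delta(c_2^{-2}\,w/z)\psi^{-}(z)\bigr)$ with the correct central prefactor; this is the standard but error-prone point of any free-field realization, and I expect it to be the main technical obstacle on the bosonic side.

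For step (b), once the currents are in place the operators $f(e_{(n,0)})$ are determined, and the content is to recognize them as the higher Macdonald/Ruijsenaars difference operators acting diagonally on $M_{\lambda}^{(q,t)}$ — equivalently, that the $M_{\lambda}^{(q,t)}$ are the joint eigenvectors of the Cartan current $\psi^{+}(z)$ on $\mathcal{F}_{q,t^{-1}}^{(1,0)}$, with eigenvalue a product of the function $\psi$ over the boxes of $\lambda$ at the box weights $q^{j-1}t^{1-i}$ (rescaled by the spectral parameter); this is the known diagonalization of the DIM Cartan subalgebra on the Fock module, and it produces precisely the spectrum $-\frac{1}{(1-q^{\pm n})(1-t^{\mp n})}+\sum_{(i,j)\in\lambda}q^{\pm n(j-1)}t^{\pm n(1-i)}$. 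Finally, writing $f(e_{(n,1)})=-[f(e_{(n,0)}),f(e_{(0,1)})]$ and the analogous relation for $f(e_{(n,-1)})$, the formulas~\eqref{eq:29} and~\eqref{eq:28} reduce to Pieri-type rules: $\partial/\partial p_1$ on $M_{\lambda}^{(q,t)}$ is a sum over the addable boxes of $\lambda$ with coefficients equal to the $\psi$-products in~\eqref{eq:29}, multiplication by $p_1$ realizes the dual box-removing rule, and the $\delta$-functions in the contour integrals localize onto the arm/leg data of the box; commuting these against the diagonal $f(e_{(n,0)})$ and matching with the classical Macdonald Pieri coefficients finishes the verification. Alternatively one may bypass step (b) almost entirely by observing that the operators~\eqref{eq:25}--\eqref{eq:28} are, up to the $SL(2,\mathbb{Z})$ automorphism $\mathcal{S}$ of~\eqref{eq:16}, the Awata--Feigin--Shiraishi free-field Fock representation of the DIM algebra, with the Cartan eigenbasis identified with Macdonald polynomials; from that point of view this last matching is bookkeeping rather than a conceptual difficulty.
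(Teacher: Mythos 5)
Your proposal is correct and follows essentially the same route as the paper, whose entire proof is the one-line statement that one verifies the commutation relations directly on the six generating elements; your plan is a detailed expansion of exactly that verification, and your closing observation that the formulas are the Awata--Feigin--Shiraishi Fock representation twisted by the $\mathcal{S}$ automorphism is precisely the remark the paper itself makes after the theorem.
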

\begin{proof}
  One verifies directly the commutation relations between the six
  generating elements $e_{(\pm 1, \pm 1)}$, $c_1$ and $c_2$.
\end{proof}
Several remarks are in order:
\begin{enumerate}
\item Some expressions in Eqs.~\eqref{eq:25}--\eqref{eq:28} define the
  same operators in two different ways, e.g.\ $e_{(1,0)}$. In these
  cases both expressions are valid due to the properties of Macdonald
  polynomials.

\item The the strange factors like $\left( \frac{t}{q}
  \right)^{\frac{1}{2} \delta_{n \geq 0}}$ in Eqs.~\eqref{eq:29},
  \eqref{eq:28} come from the application of the $\mathcal{S}$
  automorphism to the ``vertical'' Fock representation taken
  from\footnote{We also rescale the generators of~\cite{AFS} to
    conform with our commutation relations, which make the symmetry
    between $q_1$, $q_2$, $q_3$ explicit.  }~\cite{AFS}.
\end{enumerate}

We can directly evaluate the generating series $T_{(1,0)}$ and
$T_{(0,1)}$ in the Fock representation:
\begin{align}
  \label{eq:30}
  f(T_{(1,0)}(u)) M_{\lambda}^{(q,t)}(p_n) &= \exp \left( \sum_{n \geq 1}
    \frac{(-u)^n}{(1-q^n)(1-t^{-n})} \right) \prod_{(i,j)\in \lambda}
  (1 + u q^{j-1} t^{1-i}) M_{\lambda}^{(q,t)}(p_n),\\
  f(T_{(1,0)}(u)) &= \exp \left( -\sum_{n\geq 1} \frac{(-v)^n}{1-t^{-n}}
    \frac{\partial}{\partial p_n} \right).
\end{align}
It is left to find $T_{(1,1)}(uv)$. To do this we notice that the
element $\mathcal{S}^{-1} \mathcal{T}\mathcal{S} \in
\widetilde{SL}(2,\mathbb{Z})$ of the automorphism group of the algebra
preserves the generators $e_{(n,0)}$, which are diagonal in the basis
of Macdonald operators. It also preserves the central charges of the
Fock representation. One can then guess that $\mathcal{S}^{-1}
\mathcal{T}\mathcal{S}$ can be realized as an operator $\tau$ on the
Fock space, diagonal in the basis of Macdonald polynomials:
\begin{equation}
  \label{eq:32}
  f(\mathcal{S}
  \mathcal{T}^{-1}\mathcal{S}^{-1} (e_{\vec{\gamma}})) =
  \tau^{-1}f(e_{\vec{\gamma}}) \tau. 
\end{equation}
In fact it is enough to verify its action on the generators $e_{(0,\pm
  1)}$, $e_{(\pm 1 ,0)}$ to find
\begin{equation}
  \label{eq:33}
  \tau M_{\lambda}^{(q,t)}(p_n) = \left( \prod_{(i,j)\in \lambda} q^{j-1}
  t^{1-i}\right) M_{\lambda}^{(q,t)}(p_n).
\end{equation}
Thus we have
\begin{equation}
  \label{eq:36}
  f(T_{(1,1)}(uv)) = f(\mathcal{S}
  \mathcal{T}^{-1}\mathcal{S}^{-1}(T_{(1,0)}(uv))) = \tau^{-1}
  f(T_{(1,0)}(uv)) \tau.
\end{equation}

To make contact with the work of Garsia and Mellit~\cite{GM} we notice
that their parameters $q_{\mathrm{GM}}$, $t_{\mathrm{GM}}$ and the power
sums $p_n^{\mathrm{GM}}$ are related to ours as follows:
\begin{align}
  \label{eq:31}
  q_{\mathrm{GM}} &= q,\\
  t_{\mathrm{GM}} &= t^{-1},\\
  p_n^{\mathrm{GM}}&= (1-t^n) p_n.
\end{align}
They also define operators $\tau_v^{\mathrm{GM}}$,
$\Delta'^{\mathrm{GM}}_u$ and $\nabla_{\mathrm{GM}}$, which are
related to our $T_{(1,0)}$, $T_{(0,1)}$ and $\tau$ respectively:
\begin{align}
  \label{eq:34}
  \tau_v^{\mathrm{GM}} &= f(T_{(0,1)}(-v/t)),\\
  \Delta'^{\mathrm{GM}}_u &= f(T_{(1,0)}(-u)),\\
  \nabla_{\mathrm{GM}} & =  (-1)^d \tau,
\end{align}
where $d$ counts the total degree of polynomial in
$\mathcal{F}_{q_1,q_2}^{(1,0)}$. We also notice that
\begin{equation}
  \label{eq:35}
  (-1)^d f(T_{(1,1)}(u v)) (-1)^d = f(T_{(1,1)}(-u v)).
\end{equation}
Combining all these changes of variables and shifting $v \to tv$ in
$T_{(0,1)}$ we find that the DIM pentagon identity under the action of
$f$ reduces to
\begin{equation}
  \label{eq:37}
  (\Delta_u'^{\mathrm{GM}})^{-1} \tau_v^{\mathrm{GM}}
  \Delta_u'^{\mathrm{GM}} (\tau_v^{\mathrm{GM}})^{-1} = \nabla^{-1}
  \tau_{uv}^{\mathrm{GM}} \nabla,
\end{equation}
which is precisely what is proven in~\cite{GM}.

This is another nontrivial check of the DIM pentagon identity: in the
Fock space case, unlike in sec.~\ref{sec:rein-homom-or}, there are no
additional multiplicative relations $e_{\vec{\alpha}} e_{\vec{\beta}}
\sim e_{\vec{\alpha}+ \vec{\beta}}$ for $f(e_{\gamma})$.

\subsection{Direct checks}
\label{sec:direct-checks}
One can try to verify some lower orders in the expansion of the
pentagon identity~\eqref{eq:2} in $u$ and $v$. At first one gets some
identities which can be derived from the commutation relations without
deformation parameters $q_i$ making any appearance:
\begin{align}
  \label{eq:38}
  u^1v^1&: \qquad e_{(0,1)} e_{(1,0)} = e_{(1,0)} e_{(0,1)} +
  e_{(1,1)},\\
  u^2v^1&: \qquad \frac{1}{2} e_{(0,1)} \left( e_{(1,0)}^2 - e_{(2,0)}
  \right) = \frac{1}{2} (e_{(1,0)}^2 - e_{(2,0)}) e_{(0,1)} +
  e_{(1,0)} e_{(0,1)},
\end{align}
However, the check at the order $u^2 v^2$ requires the evaluation of
the commutator $[e_{(2,0)}, e_{(0,2)}]$ which corresponds to a
non-admissible triangle. After sequential application of the
commutation relations for admissible triangles we find\footnote{There
  is an error in the first coefficient in the r.h.s.\ in this
  calculation on p.~30 of~\cite{BS}.}
\begin{equation}
  \label{eq:39}
  [e_{(2,0)}, e_{(0,2)}] = \frac{\kappa_1}{2} e_{(1,1)}^2 + \left(  2
    - \frac{\kappa_2}{2 \kappa_1} \right) e_{(2,2)},
\end{equation}
which indeed leads to a complete cancellation with other terms, so
that the pentagon identity holds at order $u^2 v^2$. The computations
for higher orders quickly become very involved. Probably they can be
computerized.

\section{Applications and implications}
\label{sec:appl-impl}
One can evaluate the identity~\eqref{eq:2} in other representation of
the DIM algebra, producing some interesting identities for operators
in representation spaces. Here we list a couple of preliminary
applications of the DIM pentagon identity along these lines.

\subsection{Pentagon identity for spherical double affine Hecke
  algebra from tensor product of vector representations}
\label{sec:pent-relat-spher}
There is a homomorphism from $U_{q_1, q_2,
  q_3}(\widehat{\widehat{\mathfrak{gl}}}_1)$ to spherical double
affine Hecke algebra (sDAHA) $\mathbb{H}_n$ with $n = 1, 2, \ldots$
The case $n=1$ is just the Reineke's character, or vector
representation $\mathcal{V}_{q_i}$ discussed in
sec.~\ref{sec:rein-homom-or}. For higher $n$ the homomorphism can be
described by noting that DIM algebra carries a coproduct structure (we
have not discussed it, since we did not use it for our checks of the
pentagon relation). This allows one to take a tensor product of $n$
copies of vector representation $\mathcal{V}_{q_i}$. This produces an
action of $U_{q_1, q_2, q_3}(\widehat{\widehat{\mathfrak{gl}}}_1)$ on
functions of $n$ coordinates $w_i$, involving Ruijsenaars-Schneider
difference operators. For example, in this representation we have
\begin{equation}
  \label{eq:41}
  \rho_i^{\otimes n} (E_k(e_{(m,0)})) \sim H_k^{\mathrm{RS}},
\end{equation}
where $H_k^{\mathrm{RS}}$ are Ruijsenaars-Schneider Hamiltonians
\begin{equation}
  \label{eq:44}
  H_k^{\mathrm{RS}} = \sum_{I \subset \{1,\ldots, n\}, |I|=k} \prod_{i \in I}
  \prod_{j \in \{1,\ldots,n\}\backslash I} \frac{t w_i - w_j}{w_i -
    w_j} q^{\sum_{i \in I} w_i \partial_{w_i}}
\end{equation}
and $E_k$ are complete symmetric functions.

Taking the limit $n \to \infty$ one recovers the Fock representation
and the identity of Garsia-Mellit with
\begin{equation}
  \label{eq:42}
  p_n = \sum_{i=1}^{\infty} w_i^n
\end{equation}
being power sum variables.

\subsection{Pentagon identity for Macmahon operators}
\label{sec:pent-ident-macm}
There is an interesting representation $m$ of $U_{q_1, q_2,
  q_3}(\widehat{\widehat{\mathfrak{gl}}}_1)$ on a vector space
$\mathcal{M}$ spanned by plane partitions (or $3d$ Young diagrams),
which is called the Macmahon representation. Most of the generators,
which we have described for Fock representation in
sec.~\ref{sec:pent-ident-gars} can also be written for Macmahon
representation. For example:
\begin{equation}
  \label{eq:43}
  m(e_{(n,0)})| \pi \rangle = \left(  - \frac{1}{\kappa_n} +
    \sum_{(i,j,k) \in \pi} q_1^{n(j-1)} q_2^{n(i-1)} q_3^{n(k-1)} \right)| \pi \rangle , \qquad n \geq 1
\end{equation}
where $\pi$ is a plane partition. The expressions for $m(e_{(0,n)})$
are more involved, but can also be found using certain analogues of
Pieri rules for plane partitions. It would be interesting to write
down and check the pentagon relation in this case.

\section{Conclusions and comments}
\label{sec:conclusions}

Let us recapitulate. We have conjectured an analogue of pentagon
relation between certain generating series in the DIM algebra. This
identity is very nontrivial, since the commutation relations of the
algebra are complicated with some of them defined implicitly by
recursion. We have performed a set of tests of the identity,
evaluating it in two families of representations and have found that
it reduces to previously known (and also nontrivial) identities in
these cases. We have also verified lower orders of the expansion in
the generating parameters using commutation relations directly,
without any reference to representations. Of course a formal proof of
Conjecture~\ref{conj:pent} would be desirable.

There is one puzzling fact about the identity~\eqref{eq:2}. The
generating series $T_{\vec{\gamma}}$ look like elements of a group and
therefore it would be logical to expect that they would have some
group-like property, i.e.\
\begin{equation}
  \label{eq:45}
  \Delta(T_{\vec{\gamma}}) \stackrel{?}{=} T_{\vec{\gamma}} \otimes T_{\vec{\gamma}}
\end{equation}
where $\Delta$ is the coproduct on a DIM algebra. However, a problem
arises immediately: exactly which coproduct features in
Eq.~\eqref{eq:45}. In fact the algebra $U_{q_1, q_2,
  q_3}(\widehat{\widehat{\mathfrak{gl}}}_1)$ has an infinite number of
coproduct structures labelled by an irrational slope in $\mathbb{Z}^2$
lattice. It turns out that Eq.~\eqref{eq:45} holds only for the
coproduct associated to the slope $\vec{\gamma} + 0$, where $+0$ means
the ``closest irrational slope'' to the rational vector
$\vec{\gamma}$.  For other choice of coproduct Eq.~\eqref{eq:45} is
manifestly wrong. How then would the identity involving
$T_{\vec{\gamma}}$ hold if some of the terms are group-like, while
others are not? For $q_3 \to 1$ this problem does not arise, since the
coproducts are all equivalent and trivial. One hope might be that
coproducts for different slopes $s$ and $s'$ are related by nontrivial
Drinfeld twists $F_{s, s'} \in U_{q_1, q_2,
  q_3}(\widehat{\widehat{\mathfrak{gl}}}_1) \otimes U_{q_1, q_2,
  q_3}(\widehat{\widehat{\mathfrak{gl}}}_1)$:
\begin{equation}
  \label{eq:46}
  \Delta_s = F_{s,s'} \Delta_{s'} F_{s,s'}^{-1}.
\end{equation}
The form of the Drinfeld twist $F_{s,s'}$ is known more or less
explicitly:
\begin{equation}
  \label{eq:47}
  F_{s,s'} \sim \prod_{s<\vec{\gamma}<s',\, \gcd(\vec{\gamma}) =1} \exp \left[ \sum_{n \geq 1}
    \kappa_n e_{n\vec{\gamma}} \otimes e_{-n\vec{\gamma}} \right],
\end{equation}
however, the commutation relations of $F_{(1,0),(0,1)}$ with
$T_{(1,0)}$ and $T_{(0,1)}$ seem to be complicated. Thus, the mystery
remains: how it happens that the coproduct preserves the pentagon
relation~\eqref{eq:2}?

Finally, it would be interesting to find if similar identities
involving $T_{\vec{\gamma}}$ with non-coprime $\vec{\gamma}$ hold. In
the limit $q_3 \to 1$ there are plenty of such identities each
corresponding to a motivic wall-crossing
formula~\cite{Kontsevich:2008fj}. One could wonder if any of them
survives $q_3$-deformation.

\section*{Acknowledgements}
This work is partly supported by the Russian Science Foundation (Grant
No.20-12-00195).

\end{document}